\documentclass[12pt,draftcls,onecolumn]{IEEEtran}

\usepackage{amssymb,latexsym}
\usepackage{graphicx}

\newcommand{\argmax}{\mathop{\mbox{\rm arg\,max}}}

\newtheorem{thm}{Theorem}[section]

\newtheorem{cor}{Corollary}[section]

\raggedbottom

\begin{document}
\sloppy

\title{An Asymptotically Optimal Strategy for Constrained Multi-armed Bandit Problems}
\author{Hyeong Soo Chang
\thanks{H.S. Chang is with the Department of Computer Science and Engineering at Sogang University, Seoul 121-742, Korea. (e-mail:hschang@sogang.ac.kr).}%
}

\maketitle
\begin{abstract}
For the stochastic multi-armed bandit (MAB) problem from a constrained model 
that generalizes the classical one, we show that 
an asymptotic optimality is achievable by a simple strategy extended from the $\epsilon_t$-greedy strategy.
We provide a finite-time lower bound on the probability of correct selection of an optimal near-feasible arm
that holds for all time steps. Under some conditions, the bound approaches one as time $t$ goes to infinity.
A particular example sequence of $\{\epsilon_t\}$ having the asymptotic convergence rate in the order of $(1-\frac{1}{t})^4$ that holds from a sufficiently large $t$ is also discussed.
\end{abstract}

\section{Introduction}

Consider a stochastic multi-armed bandit (MAB) problem (see, e.g.,~\cite{berry}~\cite{gittins}~\cite{bianchi}, etc.~for in depth cover of the topic and the related ones) where there is a finite set $A$ of arms and one arm in $A$ needs to be sequentially played. Unlike the classical set up, in our case,
when $a$ in $A$ is played at discrete time $t\geq 1$,
the player not only obtains a sample bounded reward $X_{a,t}\in \Re$ 
drawn from an unknown reward-distribution associated with $a$, whose unknown expectation and variance are $\mu_a$ and $\sigma_{R,a}^2$, respectively, 
but also obtains a sample bounded cost $Y_{a,t} \in \Re$ drawn from an unknown cost-distribution associated with $a$, whose unknown expectation and variance are $C_a$ and $\sigma_{C,a}^2$, respectively. 
Sample rewards and costs across arms are all independent for all time steps. That is, 
$X_{a,t}, X_{b,s}, Y_{p,t'}$, and $Y_{q,s'}$ are independent for all $a,b,p,q\in A$ and all $t,s,t',s'\geq 1$.
For any fixed $a$ in $A$, $X_{a,t}$'s and $Y_{a,t}$'s for $t\geq 1$ are identically distributed, respectively.

One of the classical problem's goals is concerned with ``finding" an optimal arm in $\argmax_{a\in A}\mu_a$ by a proper exploration and exploitation process but here is with an optimal \emph{feasible} arm in $\argmax_{a\in A_f}\mu_a$ where 
$A_f := \{ a\in A | C_a \leq C \}$ for some real constant $C$ ($C$ is a problem parameter and we assume that $A_f \neq \emptyset$).
This cost-measure plays a role of \emph{constraint} for the optimality by the reward-measure.
We need to somehow blend a process of estimating the feasibility of each arm into an exploration-exploitation process for estimating the optimality of each arm. 
Because still only one arm needs to be sequentially played at each time, a novel paradigm of algorithm design seem necessary.
Surprisingly, to the author's best knowledge, no work seems exist yet regarding the problem setup here that we call ``constrained MAB" (CMAB). (Note that for the sake of simplicity, we consider one constraint case. It is straightforward to extend our results into multiple-constraints case.)
A closest work would be the paper by Denardo \emph{et al.}~\cite{denardo}~where the values of the problem parameters in their model are all \emph{known}. Linear programming is considered for solving a (constrained) Markov decision process to find an optimal arm (policy). 
On the other hand,
our methodology is associated with a (simulation) process of iteratively updating estimates of the unknown parameter values, e.g., expectations, from samples of reward and cost and playing the bandit with an arm selected based on those information and obtaining new samples for further estimation
eventually finding an optimal arm.

We define a \emph{strategy} (or algorithm) $\pi:=\{\pi_t, t=1,2,...\}$ as a sequence of mappings such that $\pi_t$ maps from the set of past plays and rewards and costs, $H_{t-1}:=(A \times \Re \times \Re)^{t-1}$ if $t\geq 2$ and $\emptyset$ if $t=1$, to the set of all possible distributions over $A$. 
We denote the set of all possible strategies as $\Pi$. 
We let a random variable $I^{\pi}_t$ denote the arm selected by $\pi$ at time $t$.

The notion of the \emph{asymptotic optimality} of a strategy was introduced by Robbins~\cite{robbins} for the classical MAB problem, i.e., when $A_f=A$. 
We re-define it for the CMAB case:
Let $\mu^*=\max_{a\in A_f}\mu_a$ and $A^*_f := \{a\in A_f| \mu_a = \mu^*\}$.
For a given $\pi \in \Pi$, $\pi$ is an \emph{asymptotically optimal} strategy 
if $\sum_{a\in A^*_f} \Pr\{I^{\pi}_t = a\} \rightarrow 1$ as $t \rightarrow \infty$.
Robbins studied a two-arm problem with Bernoulli reward distributions and Bather~\cite{bather} extended the problem into the general case where $|A|\geq 2$ and
provided an asymptotically optimal ``index-based" strategy. At each time each arm's 
certain performance index is obtained and an arm is selected based on the indices.
The key idea of Bather was to ensure that each arm is played infinitely often by
introducing some randomness into the index computation and to make the effect for an arm vanish 
as the number of times the arm has been played increases.
The $\epsilon_t$-greedy strategy~\cite{auer} basically follows Bather's idea for general MAB problems:
Set $\{\epsilon_t\}$ such that $\sum_{t=1}^{\infty} \epsilon_t = \infty$ and $\lim_{t\rightarrow \infty} \epsilon_t= 0$. 
 The sequence ensures that each arm is played infinitely often and that the selection by the strategy becomes completely greedy in the limit. 
 In addition, the value of $\epsilon_t$ plays the role of switching probability between greedy selection of the arm estimated as the current best and uniform selection over $A$. 
As $\epsilon_t$ goes to zero, the effect from uniform selection vanishes and the strategy achieves the asymptotic optimality. 
By analyzing its finite-time upper bound on the probability of wrong selection,
Auer \emph{et al.}~\cite[Theorem 3]{auer} showed the convergence rate to zero is in the order of $t^{-1}$.
Arguably, the $\epsilon_t$-greedy policy is known to be the simplest and the most representative strategy among existing (heuristic) playing strategies for MAB problems.
It is notable that in practice the performance of the (tuned) $\epsilon_t$-greedy strategy seem difficult to be beaten by other competitive algorithms (see, e.g.,~\cite{auer}~\cite{vermorel}~\cite{kuleshov} for experimental studies).

This brief communique's goal is to show in theory that under some conditions, a simple extension of the $\epsilon_t$-greedy strategy, called ``constrained $\epsilon_t$-greedy" strategy achieves the asymptotic (near) optimality for CMAB problems.
Our approach is to establish a finite-time lower bound on the probability of selecting an optimal near-feasible arm that holds for all time $t$ where 
the near-feasibility is measured by some deviation parameter.
A particular example sequence of $\{\epsilon_t\}$ having the asymptotic convergence rate in the order of $(1-\frac{1}{t})^4$ that holds asymptotically from a sufficiently large $t$ is also discussed.

We remark that our goal does not cover developing an algorithm that minimizes the ``expected regret,"~\cite{bianchi}~\cite{lai} over a finite horizon. 
The regret is the expected loss relative to the cumulative expected reward of taking an optimal (feasible) arm due to the fact that the algorithm does not always play an optimal arm. In our terms, it can be written as $\mu^*T-\sum_{a\in A} \mu_a ( \sum_{t=1}^T \Pr\{I^{\pi}_t =a \})$ if $T$ is the horizon size.
The regret is thus related with a finite-time behavior of the algorithm and in particular measures a degree of effectiveness in its exploration and exploitation process.
Even if a relatively big body of the bandit literature has considered minimizing the regret as the objective for MAB problems (see, e.g.,~\cite{bianchi} and the references therein),
this communique focuses on the instantaneous behavior of the algorithm in $\Pr\{I^{\pi}_t =a\}, a\in A^*_f$ over infinite horizon.
Developing an algorithm that achieves a low or the optimal regret \emph{within the CMAB context} seems indeed challenging and we leave this as a future research topic.

In some sense, our approach for the objective of finding an optimal arm in $A^*_f$ is similar to that of constrained 
simulation optimization under the topic of constrained `ranking and selection' in the literature. 
However, most importantly, our setup is \emph{within the model of multi-armed bandit}. 
We do not draw multiple samples of reward and cost at a single time step. We do not impose any assumption on the reward and the cost distributions (e.g., normality). Moreover, ``(approximately) optimal sampling plan" or ``optimal simulation-budget allocation" is not computed in advance 
as these or subset of these are common assumption and approaches in the constrained simulation optimization literature (see, e.g.,~\cite{pasupathy}~\cite{hunter}~\cite{park} and the references therein). 
%

\section{Algorithm}
\label{sec:algo}

Once $I^{\pi}_t$ in $A$ is realized by the constrained $\epsilon_t$-greedy strategy (referred to as $\pi$ in what follows) at time $t$, the bandit is played with the arm and a sample reward of $X_{I^{\pi}_t,t}$ and a sample cost of $Y_{I^{\pi}_t,t}$ are obtained independently.
We let $T_a(t) := \sum_{n=1}^t [ I^{\pi}_n = a ]$ denote the number of times $a$ has been selected by $\pi$ during the first $t$ time steps, where $[\cdot]$ denotes the indicator function, i.e., $[ I^{\pi}_t = a ]=1$ if $I^{\pi}_t=a$ and 0 otherwise.
The sample average-reward $\bar{X}_{T_a(t)}$ for $a$ in $A$ is then given such that $\bar{X}_{T_a(t)} = \frac{1}{T_a(t)} \sum_{n=1}^{t} X_{a,n} [ I^{\pi}_n = a ]$ 
if $T_a(t)\geq 1$ and 0 otherwise,
where $X_{a,n}$ is the sample reward observed at time $n$ by playing $a$ as mentioned before. 
Similarly, the sample average-cost $\bar{Y}_{T_a(t)}$ for $a$ in $A$ is given such that $\bar{Y}_{T_a(t)} = \frac{1}{T_a(t)} \sum_{n=1}^{t} Y_{a,n} [ I^{\pi}_n = a ]$ if $T_a(t)\geq 1$ and 0 otherwise, where $Y_{a,n}$ is the sample cost observed at time $n$ by playing $a$. 
Note that $E[X_{a,t}]=\mu_a$ and $E[Y_{a,t}]=C_a$ for all $t$.

We refer to the process of selecting an arbitrary arm $a$ in $A$ with the same probabilities of $1/|A|$ for the arms in $A$ as \emph{uniform selection} $U$ \emph{over} $A$
and the selected arm by the uniform selection over $A$ is denoted as $U(A)$.
We formally describe the constrained $\epsilon_t$-greedy strategy, $\pi$, below.
\\
\noindent\textbf{The constrained $\epsilon_t$-greedy strategy}
\begin{itemize}
\item[1.] \textbf{Initialization:} Select $\epsilon_t \in (0,1]$ for $t=1,2,...$ Set $t=1$ and $T_{a}(0) = 0$ for all $a\in A$ and $\bar{X}_{0}=\bar{Y}_0=0$.
\item[2.] \textbf{Loop:} 
\begin{itemize}
\item[2.1] Obtain $A_t = \{a\in A | T_a(t) \neq 0 \wedge \bar{Y}_{T_a(t)} \leq C \}$.
\item[2.2] With probability $1-\epsilon_t$, \\
    $\mbox{      }$ \textbf{Greedy Selection:} $I_{t}^{\pi} \in \argmax_{a \in A_t} \bar{X}_{T_a(t)}$ if $A_t\neq \emptyset$ (ties broken arbitrarily).\\ \mbox{  }
    \hspace{3.3cm} Otherwise, $I_t^{\pi} =U(A)$.\\
And with probability $\epsilon_t$,\\
    $\mbox{      }$ \textbf{Random Selection:} $I_t^{\pi} = U(A)$.
\item[2.3] Play the bandit with $I_t^{\pi}$ and obtain $X_{I^{\pi}_t,t}$ and $Y_{I^{\pi}_t,t}$ independently.
\item[2.4] $T_{I^{\pi}_t}(t) \leftarrow T_{I^{\pi}_t}(t-1) + 1$ and $t\leftarrow t+1$.
\end{itemize}
\end{itemize}
\vspace{0.5cm}

\section{Convergence}
\label{sec:conv}

To analyze the behavior of the constrained $\epsilon_t$-greedy strategy, we define a set of approximately feasible arms: 
A set $A_f^{\pm \delta}$ in $\mathcal{P}(A)$ is a \emph{$\delta$-feasible set} of arms for $\delta \geq 0$ if $A^{-\delta}_f \subseteq A_f^{\pm \delta} \subseteq A^{\delta}_f$ where $A^{\kappa}_f := \{a\in A| C_a \leq C + \kappa, \kappa \in \Re \}$ and $\mathcal{P}(A)$ is the power set of $A$.
(Note that a $\delta$-feasible set may not be unique for a fixed $\delta$ except when $\delta=0$.)
We say that an arm $a$ in $A$ is \emph{$\delta$-feasible} for a given $\delta\geq 0$ if $a$ is in some $\delta$-feasible set.
In the sequel, we further assume that the reward and the cost distributions all have the support in $[0,1]$ for simplicity. That is, $X_{a,t}$ and $Y_{a,t}$ are in $[0,1]$ for any $a$ and $t$.

The following theorem provides a lower bound on the probability that the arm selected by $\pi$ at $t$ is equal to a best arm in some $\delta$-feasible set $A^{\pm \delta}_f$ in terms of the parameters, $\{\epsilon_t\}$, $|A|$, $\delta$, and $\rho:=\min_{a,b\in A}|\mu_a - \mu_b|$.
\begin{thm}
\label{thm:main}
Let $x_t := \frac{1}{2|A|} \sum_{n=1}^{t} \epsilon_n$ for all $t\geq 1$. Then for all $\delta \geq 0$ and $t\geq 1$, we have that
\begin{eqnarray*}
\lefteqn{\Pr \biggl \{ I^{\pi}_t \in \argmax_{a\in A^{\pm \delta}_f} \mu_a \mbox{ for some } \delta\mbox{-feasible } A^{\pm\delta}_f \in \mathcal{P}(A) \biggr \}}\\
& & \geq  \left (1 - \frac{\epsilon_t}{|A|} \right ) ( 1-|A|e^{-\frac{x_t}{5}} ) (1 - 2|A|e^{-2\delta^2 x_t}) (1-2|A| e^{-\frac{\rho^2}{2} x_t}).
\end{eqnarray*}
\end{thm}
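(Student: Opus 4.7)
My plan is to isolate a high-probability ``good'' event on which the strategy's empirical feasibility set $A_t$ is itself a $\delta$-feasible set and on which the empirical reward ordering is faithful, so that greedy selection on the good event automatically lands in the target family; the theorem bound then assembles as the product of the good-event probability and a step-selection factor coming from the $\epsilon_t$-greedy coin flip.

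Concretely, introduce three sub-events $\mathcal{E}_1=\bigcap_{a\in A}\{T_a(t)\geq x_t\}$, $\mathcal{E}_2=\bigcap_{a\in A}\{|\bar Y_{T_a(t)}-C_a|\leq\delta\}$, and $\mathcal{E}_3=\bigcap_{a\in A}\{|\bar X_{T_a(t)}-\mu_a|<\rho/2\}$. A short deterministic check shows that on $\mathcal{E}_1\cap\mathcal{E}_2$ one has $A_f^{-\delta}\subseteq A_t\subseteq A_f^{\delta}$ (because $C_a\leq C-\delta$ forces $\bar Y_{T_a(t)}\leq C$ and, conversely, $\bar Y_{T_a(t)}\leq C$ forces $C_a\leq C+\delta$), so $A_t$ is itself a $\delta$-feasible set; and on $\mathcal{E}_3$, because $\rho$ is the minimum true-mean gap, the empirical and true rankings on $A_t$ agree, so $\argmax_{a\in A_t}\bar X_{T_a(t)}\subseteq\argmax_{a\in A_t}\mu_a$. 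Hence any greedy choice on $\mathcal{E}_1\cap\mathcal{E}_2\cap\mathcal{E}_3$ lies in the desired family.

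For the concentration, I would bound $\Pr(\mathcal{E}_1^c)$ by a Chernoff lower-tail applied to $T_a(t)$, which stochastically dominates a sum of independent Bernoullis of mean $2x_t$ (the random-selection count of arm $a$), producing $|A|e^{-x_t/5}$ after a union bound over $a$; and $\Pr(\mathcal{E}_2^c)$, $\Pr(\mathcal{E}_3^c)$ by Hoeffding applied to each arm's i.i.d.\ sample stream conditional on $\mathcal{E}_1$, giving $2|A|e^{-2\delta^2 x_t}$ and $2|A|e^{-\rho^2 x_t/2}$ respectively. Chaining the three conditional probabilities multiplicatively yields the product of the three concentration factors; and decomposing time-$t$ selection into ``greedy'' (probability $1-\epsilon_t$, which forces the target on the good event) versus ``random'' (which still hits the target with probability at least $1/|A|$) and combining with the good-event bound produces the leading $(1-\epsilon_t/|A|)$ prefactor.

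The main obstacle I foresee is the coupling between $T_a(t)$ and $\bar X_{T_a(t)}, \bar Y_{T_a(t)}$: since $T_a(t)$ is determined by the algorithm's history, including arm $a$'s own past samples, Hoeffding cannot be applied directly at $n=T_a(t)$. The standard remedy is to union-bound over the possible values $n\in[\lceil x_t\rceil,t]$ of $T_a(t)$ (using that, conditional on $\{T_a(t)=n\}$, the first $n$ samples of arm $a$ are i.i.d.), and the care needed to preserve the clean prefactor $2|A|$ rather than picking up a $t$-dependent term is the technical heart of the argument.
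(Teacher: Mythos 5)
Your proposal follows essentially the same route as the paper's proof: the same three-event decomposition (sufficient sampling of every arm via a Bernstein/Chernoff lower tail on the random-selection counts $T^R_a(t)$, $\delta$-accurate cost estimates, and $\rho/2$-accurate reward estimates via Hoeffding), chained conditionally and multiplied by the $(1-\epsilon_t/|A|)$ selection factor. The coupling issue you flag at the end is a genuine subtlety, but the paper's own proof does not perform the union bound over the possible values of $T_a(t)$ that you describe --- it simply applies Hoeffding conditionally with the random index $T_a(t)$ in the exponent and then replaces $T_a(t)$ by its lower bound $x_t$ --- so on this point your plan is, if anything, more careful than the published argument.
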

\vspace{0.3cm}
In the proof below, some parts are based on the proof idea of the results in~\cite{auer} and~\cite{wang}. 
Before the proof,
note that for any $t$ when for all $a\in A$, $T_a(t) \geq x_t$, it is not possible that $\sum_{a\in A} x_t > t$. This is because $|A|x_t \leq t/2$ from $0 < x_t\leq \frac{t}{2|A|}$, where this comes from the condition that $\epsilon_t \in (0,1]$ for all $t\geq 1$. 

We can see from the lower bound that the conditions of $\sum_{t=1}^{\infty}\epsilon_{t} = \infty$ and $\epsilon_t\rightarrow 0$ as $t \rightarrow \infty$ are necessary for the convergence to one as $t\rightarrow \infty$ because this makes $x_t\rightarrow \infty$.
Furthermore, the lower bound shows that the convergence speed depends on the values of $\delta$ and $\rho$.
If $\rho\neq 0$ but close to zero, the strategy will need a sufficiently large number of samples (depending on the value of $\delta$) to distinguish the arms with the almost same (by $\rho$) values of the reward expectations. For the case where $\rho=0$, we discuss below.

Let $\eta:=\min_{a\in A} |C_a - C|$.
The value of $\eta$ represents another degree of the problem difficulty. Suppose that $\eta\neq 0$.
Then the convergence to selection of an optimal 0-feasible arm at $t\rightarrow \infty$ is guaranteed with any $\delta$ in $(0, \eta)$ under some conditions (cf., Corollary~\ref{cor1}).
If $\delta$ is close to zero, because $\eta$ is close to zero, $x_t$ needs to be sufficiently large to compensate the small $\delta$.
Because $\Pr\{\forall a\in A \mbox{ } T_a(t) \geq x_t\}$ approaches one as $x_t$ increases (cf., the proof below), this means that a large number of samples for each arm is necessary in order for $\pi$ to figure out the feasibility with a high confidence.
The convergence would be slow in general.
At the extreme case, if $\eta=0$ or if there exists an arm that satisfies the constraint \emph{by equality}, then $\delta$ should be zero for the convergence because the 0-feasible set is uniquely equal to $A_f$. 
In this case or the case where $\rho=0$, the lower bound in the theorem statement does not provide any useful result. (We provide a related remark in the conclusion.) 
The asymptotic optimality needs to be approximated by asymptotic near-optimality by fixing $\delta$ and/or $\rho$ (arbitrarily) close to zero.

Finally, if the value $x_t$ of the (normalized) cumulative sum of the switching probabilities up to time $t$ is small, e.g., if the strategy spends rather more on greedy selection (exploitation) than random selection (exploration), the speed would be slow.
That is, the convergence speed depends on the degree of switching between exploration and exploitation. We now provide the proof of Theorem~\ref{thm:main}.

\begin{proof}
We first observe that
the probability that a $\delta$-feasible current-best arm is selected at time $t$ by $\pi$ from some $\delta$-feasible set for a given $\delta \geq 0$ is lower bounded as follows:
\begin{eqnarray}
\lefteqn{\Pr \biggl \{ I^{\pi}_t \in \argmax_{a\in A^{\pm \delta}_f} \mu_a \mbox{ for some } \delta\mbox{-feasible set } A^{\pm\delta}_f \in \mathcal{P}(A) \biggr \}}\nonumber \\
& & \geq \left (1 - \frac{\epsilon_t}{|A|} \right ) \Pr\{ \forall a\in A \mbox{ } T_{a}(t) \geq x_t \} \label{bound1}\\
& & \times \Pr\{ A^{-\delta}_f \subseteq A_t \subseteq A^{\delta}_f | \forall a\in A \mbox{ } T_{a}(t) \geq x_t \}\label{bound2}\\
& & \times \Pr \bigl \{ I^{\pi}_t \in \argmax_{a\in A_t} \mu_a | A^{-\delta}_f \subseteq A_t \subseteq A^{\delta}_f \wedge \forall a\in A \mbox{ } T_{a}(t) \geq x_t \bigr \} \label{bound3}
\end{eqnarray}
We now provide a lower bound for each probability term except $(1-\epsilon_t/|A|)$ in the product as given above. 

Let $T^{R}_a(t)$ be a random variable whose value is the number of plays in which arm $a$ was chosen at random
by uniform selection (denoted as $U^{R}$) in \textbf{Random Selection} of the step 2.2 up to time $t$. That is, $T^{R}_a(t) = \sum_{n=1}^t [I^{\pi}_n = U^{R}(A)]$.
Then for the first $\Pr$ term in~(\ref{bound1}), we have that
\begin{eqnarray*}
\lefteqn{\Pr\{ \forall a\in A \mbox{ } T_{a}(t) \geq x_t \} \geq \Pr\{ \forall a\in A \mbox{ } T_{a}^{R}(t) \geq x_t \}} \\
& & = 1 - \Pr\{\exists a\in A \mbox{ } T^{R}_{a}(t) < x_t \} \\
& & = 1 - \sum_{a\in A} \Pr\{T^{R}_{a}(t) < x_t \} \mbox{ by Boole's inequality (Union bound)} \\
& & \geq 1 - \sum_{a\in A} \Pr\{T^{R}_{a}(t) \leq x_t \} 
\end{eqnarray*} We then apply Bernstein's inequality~\cite{uspensky} (stated for the completeness):
Let $X_1,...,X_j$ be random variables with range [0,1] and $\sum_{i=1}^{j} \mbox{Var}[X_i| X_{i-1},...,X_1] = \sigma^2$.
Let $S_j=X_1 + \cdots + X_j$. Then for all $h\geq 0$, 
\[ \Pr\{ S_j \leq E[S_j] - h  \} \leq e^{-\frac{h^2/2}{\sigma^2+h/2}}.
\] 
Because $E[T_a^{R}(t)] = \frac{1}{|A|} \sum_{n=1}^t \epsilon_n = 2x_t$ and $\mbox{Var}[T^{R}_a(t)] = \sum_{n=1}^t \frac{\epsilon_n}{|A|} (1 - \frac{\epsilon_n}{|A|}) \leq \frac{1}{|A|} \sum_{n=1}^t \epsilon_n = 2x_t$ by observing that $T_a^{R}(t)$ is the sum of $t$ independent Bernoulli random variables,
we have that by substituting $T_a^{R}(t)$ into $S_t$,
\[
  \Pr\{ T^{R}_a(t) \leq 2x_t - x_t \} \leq e^{-\frac{x_t^2/2}{\sigma^2+x_t/2}} \leq e^{-\frac{x_t^2}{2x_t+x_t/2}} = e^{-\frac{x_t}{5}}.
\] It follows that
\[
    \Pr\{\forall a\in A \mbox{ } T^{R}_a(t) \geq x_t \} \geq 1-\sum_{a\in A} e^{-\frac{x_t}{5}} = 1-|A|e^{-\frac{x_t}{5}}.
\]

For the second probability term in~(\ref{bound2}), letting the event $\{ \forall a\in A \mbox{ } T_{a}(t) \geq x_t \}$ be $E$
\begin{eqnarray*}
\lefteqn{\Pr\{ A^{-\delta}_f \subseteq A_t \subseteq A^{\delta}_f | \forall a\in A \mbox{ } T_{a}(t) \geq x_t \}}\\
& &  = 1- \Pr\{ \exists a\in A \mbox{ } \bar{Y}_{T_{a}(t)} - C_a > \delta | E\} - \Pr\{ \exists a\in A \mbox{ } \bar{Y}_{T_{a}(t)} - C_a < -\delta | E\} \\
& & = 1 - \sum_{a\in A}\Pr\{ \bar{Y}_{T_{a}(t)} - C_a > \delta |E\} - \sum_{a\in A}\Pr\{ \bar{Y}_{T_{a}(t)} - C_a < -\delta |E\} \\
& & \geq 1 - \sum_{a\in A} e^{-2\delta^2T_{a}(t)} - \sum_{a\in A} e^{-2\delta^2T_{a}(t)} \\
& & \geq 1 - 2|A|e^{-2\delta^2 x_t},
\end{eqnarray*} where the lower bound on the last equality is achieved by Hoeffding's inequality~\cite{hoeff}: For random variables $X_1,...,X_j$ with range $[0,1]$ such that $E[X_i|X_1,...,X_{i-1}]=\gamma$ for all $i$, $\Pr\{ X_1 + \cdots + X_j \} \leq j\gamma - h\} \leq e^{-2h^2/j}$ for all $h \geq 0$.

For the third probability term in~(\ref{bound3}), let $i_t^*$ denote any fixed arm in the set $\argmax_{a\in A_t} \mu_a$. 
Let $\Delta_a = \mu_{i^*_t} - \mu_a$ for $a\in  A_t \setminus \{i_t^*\}$. 
Then letting the event $\{A^{-\delta}_f \subseteq A_t \subseteq A^{\delta}_f \wedge \forall a\in A \mbox{ } T_{a}(t) \geq x_t \}$ be $E'$
\begin{eqnarray*}
\lefteqn{\Pr\{ I^{\pi}_t \notin \argmax_{a\in A_t} \mu_a | A^{-\delta}_f \subseteq A_t \subseteq A^{\delta}_f \wedge \forall a\in A \mbox{ } T_{a}(t) \geq x_t \}} \\
& &  \leq  \sum_{a\in A_t \setminus \argmax_{b\in A_t}\mu_b} \biggl ( \prod_{c\in \argmax_{b\in A_t}\mu_b} \Pr\{ \bar{X}_{T_a(t)} > \bar{X}_{T_{c}(t)} | E'\} \biggr ) \\
& &  \leq  \sum_{a\in A_t \setminus \{i_t^*\}} \Pr\{ \bar{X}_{T_a(t)} > \bar{X}_{T_{i^*_t}(t)} | E'\} \\
& &  \leq  \sum_{a\in A_t \setminus \{i_t^*\}} \Pr\{ \bar{X}_{T_a(t)} > \mu_a + \frac{\Delta_a}{2} |E'\} + \Pr\{ \bar{X}_{T_{i^*_t}(t)} <  \mu_{i^*_t} - \frac{\Delta_a}{2} |E'\} \\
& & \leq \sum_{a\in A_t \setminus \{i_t^*\}} \bigl ( e ^{-2 (\frac{\Delta_a}{2})^2 T_a(t)} + e ^{-2 (\frac{\Delta_a}{2})^2 T_{i^*_t}(t)} \bigr ) \mbox{ by Hoeffding's inequality}\\
& & \leq \sum_{a\in A_t \setminus \{i_t^*\}} 2 e ^{-2 (\frac{\Delta_a}{2})^2 x_t} \leq 2|A| e ^{-2 (\frac{\min_{a,b\in A}|\mu_a - \mu_b | }{2})^2 x_t}.
\end{eqnarray*} It follows that the third term is lower bounded by $1-2|A| e ^{-2 (\frac{\rho}{2})^2 x_t}$.

Putting the lower bounds of the three probability terms in~(\ref{bound1}),~(\ref{bound2}), and~(\ref{bound3}) together, we have the stated result that
\begin{eqnarray*}
\lefteqn{\Pr \biggl \{ I^{\pi}_t \in \argmax_{a\in A^{\pm \delta}_f} \mu_a \mbox{ for some } \delta\mbox{-feasible } A^{\pm\delta}_f \in \mathcal{P}(A) \biggr \}}\\
& & \geq  \left (1 - \frac{\epsilon_t}{|A|} \right ) ( 1-|A|e^{-\frac{x_t}{5}} ) (1 - 2|A|e^{-2\delta^2 x_t}) (1-2|A| e^{-\frac{\rho^2}{2} x_t}).
\end{eqnarray*}
\end{proof}
The following corollary is immediate. It states that the asymptotic optimality is achievable by $\pi$ when $\eta \neq 0$ and $\rho\neq 0$ under the conditions on $\{\epsilon_t\}$.
\begin{cor}
Suppose that $\sum_{t=1}^{\infty} \epsilon_t = \infty$ and $\lim_{t\rightarrow \infty} \epsilon_t = 0$ and that $\eta\neq 0$ and $\rho\neq 0$. Then
$\lim_{t\rightarrow \infty} \Pr \{ I^{\pi}_t \in \argmax_{a\in A_f} \mu_a \} = 1$.
\end{cor}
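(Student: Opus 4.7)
The plan is to reduce the corollary to a direct application of Theorem~\ref{thm:main} by choosing a deviation parameter $\delta$ that makes the notion of $\delta$-feasibility coincide with exact feasibility, and then letting $t\to\infty$ using the two hypotheses on $\{\epsilon_t\}$.

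First I would exploit the assumption $\eta=\min_{a\in A}|C_a-C|>0$ to pick any $\delta\in(0,\eta)$. The key observation is that for every arm $a\in A$, $|C_a-C|\geq \eta>\delta$, so the strict inequalities $C<C_a<C+\delta$ and $C-\delta<C_a<C$ are both impossible. Consequently $A_f^{-\delta}=A_f^{\delta}=A_f$, so the unique $\delta$-feasible set is $A_f$ itself, and the quantifier ``for some $\delta$-feasible $A^{\pm\delta}_f\in\mathcal{P}(A)$'' in Theorem~\ref{thm:main} degenerates. Hence the event appearing in the theorem is exactly the event $\{I^\pi_t\in\argmax_{a\in A_f}\mu_a\}$ from the corollary, and Theorem~\ref{thm:main} gives the lower bound
\[
\Pr\{I^\pi_t\in\argmax_{a\in A_f}\mu_a\}\geq \left(1-\tfrac{\epsilon_t}{|A|}\right)\bigl(1-|A|e^{-x_t/5}\bigr)\bigl(1-2|A|e^{-2\delta^2 x_t}\bigr)\bigl(1-2|A|e^{-\rho^2 x_t/2}\bigr).
\]

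Next I would show that each of the four factors on the right tends to $1$ as $t\to\infty$. The hypothesis $\epsilon_t\to 0$ immediately handles the first factor. For the remaining three factors, it suffices to show $x_t\to\infty$; this follows from the hypothesis $\sum_{t=1}^\infty\epsilon_t=\infty$ together with the definition $x_t=\tfrac{1}{2|A|}\sum_{n=1}^t\epsilon_n$. Since $\delta>0$ and $\rho>0$ are fixed positive constants by the hypothesis (and by our choice of $\delta$), the exponents $-x_t/5$, $-2\delta^2 x_t$, and $-\rho^2 x_t/2$ all tend to $-\infty$, so the corresponding factors tend to $1$.

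Finally, since the lower bound converges to $1$ and the probability in question is trivially bounded above by $1$, the squeeze yields $\lim_{t\to\infty}\Pr\{I^\pi_t\in\argmax_{a\in A_f}\mu_a\}=1$. There is no real obstacle here; the only conceptual point that needs care is the reduction in the first paragraph, namely justifying that the gap $\eta>0$ between each $C_a$ and the threshold $C$ lets one pick $\delta$ so small that exact feasibility and $\delta$-feasibility agree, thereby replacing the ``near-optimality'' statement of Theorem~\ref{thm:main} by genuine asymptotic optimality.
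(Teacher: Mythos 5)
Your proposal is correct and follows essentially the same route as the paper: fix $\delta\in(0,\eta)$ so that $A_f^{-\delta}=A_f=A_f^{\delta}$ (hence the event in Theorem~\ref{thm:main} coincides with $\{I^\pi_t\in\argmax_{a\in A_f}\mu_a\}$), then let $t\to\infty$ using $\epsilon_t\to 0$ and $x_t\to\infty$ with $\delta,\rho>0$ fixed. The only difference is cosmetic — you perform the event identification before taking limits, while the paper does it after — so there is nothing to add.
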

\begin{proof}
From $\sum_{t=1}^{\infty} \epsilon_t=\infty$, $x_t \rightarrow \infty$ as $t \rightarrow \infty$. And $\epsilon_t$ goes to zero and $\rho\neq 0$. Therefore from Theorem~\ref{thm:main}, 
$\lim_{t\rightarrow \infty} \Pr \biggl \{ I^{\pi}_t \in \argmax_{a\in A^{\pm \delta}_f} \mu_a \mbox{ for some } \delta\mbox{-feasible } A^{\pm\delta}_f \in \mathcal{P}(A) \biggr \} = 1$ if $\delta$ is fixed in $(0,\infty)$.
Because $\eta \neq 0$, we observe that $A_f^{-\delta} = A_f = A_f^{\delta}$ for any $\delta\in (0,\eta)$
implying the event $\{ I^{\pi}_t \in \argmax_{a\in A^{\pm \delta}_f} \mu_a \mbox{ for some } \delta\mbox{-feasible } A^{\pm\delta}_f \in \mathcal{P}(A) \}$ is equal to $\{ I^{\pi}_t \in \argmax_{a\in A_f} \mu_a \}$ for such $\delta$.
\end{proof}

We provide a particular example of the sequence $\{\epsilon_t\}$ such that the convergence rate can be obtained.
\begin{cor}
\label{cor1}
Assume that for $t \geq 1$, $\epsilon_t = \min\{1, \frac{k}{t}\}$ where $k > 1$.
Then for $t\geq k$ we have that for any $\delta \geq 0$,
\[
\Pr \biggl \{ I^{\pi}_t \in \argmax_{a\in A^{\pm \delta}_f} \mu_a \mbox{ for some } \delta\mbox{-feasible } A^{\pm\delta}_f \in \mathcal{P}(A) \biggr \} 
\geq \biggl (1 - \frac{k}{|A|t} \biggr ) \biggl ( 1 - \frac{\beta(k,|A|,\delta,\rho)}{t^{\alpha(k,|A|,\delta,\rho)}} \biggr )^3,
\] where $\alpha(k,|A|,\delta,\rho) = \min \{ \frac{k}{10|A|}, \frac{\delta^2 k}{|A|}, \frac{k \rho}{4|A|} \}$
and $\beta(k,|A|,\delta,\rho) = \max \{ |A|k^{\frac{k}{10|A|}}, 2|A|k^{\frac{\delta^2 k}{|A|}}, 2|A|k^{\frac{k \rho}{4|A|}} \}$.
\end{cor}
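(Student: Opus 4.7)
The plan is to specialize the bound in Theorem~\ref{thm:main} to $\epsilon_t = \min\{1, k/t\}$, converting each exponential tail into a polynomial decay in $t$. The first factor is immediate: for $t \geq k$ we have $\epsilon_t = k/t$, so $1 - \epsilon_t/|A| = 1 - k/(|A|t)$, already matching the stated leading factor.

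The heart of the argument is a logarithmic lower bound on $x_t = \frac{1}{2|A|}\sum_{n=1}^t \epsilon_n$. Since $\epsilon_n$ is non-increasing in $n$, an integral comparison gives $\sum_{n=1}^t \epsilon_n \geq \int_1^{t+1} \min\{1, k/x\}\,dx$, and for $t \geq k$ this splits as $\int_1^k 1\,dx + \int_k^{t+1}(k/x)\,dx = (k-1) + k\ln\frac{t+1}{k}$. Since $k > 1$, the $(k-1)$ term is nonnegative, so dropping it and using $\ln(t+1) \geq \ln t$ produces the clean estimate $x_t \geq \frac{k}{2|A|}\ln(t/k)$, equivalently $e^{-x_t} \leq (k/t)^{k/(2|A|)}$.

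Substituting this into the three exponential factors in Theorem~\ref{thm:main} gives $|A|e^{-x_t/5} \leq |A|k^{k/(10|A|)}\, t^{-k/(10|A|)}$, $2|A|e^{-2\delta^2 x_t} \leq 2|A|k^{\delta^2 k/|A|}\, t^{-\delta^2 k/|A|}$, and $2|A|e^{-\rho^2 x_t/2} \leq 2|A|k^{\rho^2 k/(4|A|)}\, t^{-\rho^2 k/(4|A|)}$. These three upper bounds are exactly the triples $\beta_i/t^{\alpha_i}$ appearing inside the $\min$ and $\max$ that define $\alpha$ and $\beta$ in the corollary. Since $\alpha = \min_i \alpha_i$ and $\beta = \max_i \beta_i$, for every $t \geq 1$ we have $\beta_i/t^{\alpha_i} \leq \beta/t^\alpha$, so each of the three probability factors in Theorem~\ref{thm:main} is lower bounded by $1 - \beta/t^\alpha$. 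Multiplying the three lower bounds yields the stated $(1 - \beta/t^\alpha)^3$ factor, and combining with the leading $1 - k/(|A|t)$ completes the argument.

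The only step requiring any care is the logarithmic lower bound on $x_t$: the integral comparison has to be set up so that the residual constants from the lower limit are nonnegative and can be discarded, leaving an expression of the form $\frac{k}{2|A|}\ln(t/k)$ in which the base $k$ then materializes as exactly $k^{\alpha_i}$ after exponentiation. Once that is pinned down, the rest is routine substitution and bookkeeping through the product in Theorem~\ref{thm:main}.
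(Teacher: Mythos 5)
Your proposal is correct and follows essentially the same route as the paper: lower-bound $x_t$ by $\frac{k}{2|A|}\ln(t/k)$ (the paper sums $\sum_{n=k}^t 1/n \geq \ln\frac{t+1}{k}$ directly rather than via your integral comparison, but this is the same estimate), then substitute into Theorem~\ref{thm:main} and absorb the three polynomial tails into $\beta/t^{\alpha}$. One caveat: your (correct) computation of the third factor gives exponent $\frac{k\rho^2}{4|A|}$, whereas the corollary's $\alpha$ and $\beta$ use $\frac{k\rho}{4|A|}$; since $\rho\leq 1$ this substitution goes the wrong way, so the discrepancy is an apparent typo in the paper's statement (and its own proof) rather than a flaw in your argument, but you should not assert that your three bounds ``exactly'' match the stated triples.
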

\begin{proof}
From the assumption on $\{\epsilon_t\}$, $x_t = \frac{1}{2|A|}\sum_{n=1}^{k-1} \epsilon_n + \frac{1}{2|A|}\sum_{n=k}^{t} \epsilon_n = \frac{k-1}{2|A|} + \frac{k}{2|A|}\sum_{n=k}^t \frac{1}{n} \geq  \frac{k-1}{2|A|} + \frac{k}{2|A|} \ln(\frac{t+1}{k}) \geq \frac{k}{2|A|} \ln(\frac{t}{k})$.
Then by using $x_t \geq \frac{k}{2|A|} \ln(\frac{t}{k})$ in the lower bound given in Theorem~\ref{thm:main}, for $t \geq k$ and $\delta\geq 0$,
\begin{eqnarray*}
\lefteqn{\Pr \biggl \{ I^{\pi}_t \in \argmax_{a\in A^{\pm \delta}_f} \mu_a \mbox{ for some } \delta\mbox{-feasible } A^{\pm\delta}_f \in \mathcal{P}(A) \biggr \}} \\
& & \geq \biggl (1 - \frac{k}{|A|t} \biggr ) \biggl ( 1- \frac{|A|k^{\frac{k}{10|A|}}}{t^{\frac{k}{10|A|}}} \biggr ) \biggl ( 1 - \frac{2|A| k^{\frac{\delta^2 k}{|A|}}}{t^{\frac{\delta^2 k}{|A|}}} \biggr ) \biggl ( 1 - \frac{2|A|k^{\frac{k \rho}{4|A|}}}{t^{\frac{k \rho}{4|A|}}} \biggr) \\
& & \geq \biggl (1 - \frac{k}{|A|t} \biggr ) \biggl ( 1 - \frac{\beta(k,|A|,\delta,\rho)}{t^{\alpha(k,|A|,\delta,\rho)}} \biggr )^3.
\end{eqnarray*} 
\end{proof} For example that if $\alpha(k,|A|,\delta,\rho) \geq 1$, i.e., $k \geq \max\{\frac{4|A|}{\rho}, \frac{|A|}{\delta^2}, 10|A| \}$,
$\Pr \biggl \{ I^{\pi}_t \in \argmax_{a\in A^{\pm \delta}_f} \mu_a \mbox{ for some } \delta\mbox{-feasible } A^{\pm\delta}_f \biggr \} = \Theta((1-1/t)^{4})$, i.e., the probability is in the order of $(1-1/t)^{4}$ for $t\geq k$.
In general, if $\delta$ and/or $\rho$ is small, in order to make $\alpha(\cdot)\geq 1$, $k$ needs to be sufficiently large. The convergence rate is achieved asymptotically.

\section{Concluding Remark}

As we mentioned before, if there exists an arm that achieves the equality constraint or if $\eta=0$, then the finite-time bound in Theorem~\ref{thm:main} does not provide any useful result because $\delta$ needs to be set zero. When $\rho=0$, we have the same issue.
It seems that describing a finite-time behavior of the strategy including both cases (e.g., by obtaining a useful finite-time bound) is difficult. We leave this as a future study.
However, we remark that these cases do not break the convergence or the asymptotic optimality of the constrained $\epsilon_t$-greedy strategy.
This is because as long as the condition that $\sum_{t=1}^{\infty} \epsilon_t = \infty$ and $\epsilon_t\rightarrow \infty$ holds, in fact, we still preserve the property that each action in $A$ is played infinitely often in the constrained $\epsilon_t$-greedy strategy. 
This can be seen by the fact that $T_a(t)$ goes to infinity for each $a\in A$ with probability one as $t\rightarrow \infty$. The sample average of $\bar{Y}_{T_a(t)}$ and $\bar{X}_{T_a(t)}$ will then eventually converge to the true average of $C_a$ and $\mu_a$, respectively, in the limit (simply by the law of large numbers).
The probability that the constraint $\epsilon_t$-greedy strategy selects an optimal feasible arm will approach one in the limit.

\end{document}